\newtheorem{proof}{Proof}
\newtheorem{theorem}{Theorem}
\newtheorem{remark}{Remark}
\journal{...}
\begin{document}
\begin{frontmatter}
\title{Mean Exit Time and Escape Probability for the Stochastic Logistic Growth Model with Multiplicative $\alpha-$Stable L\'evy Noise }

\author{Almaz Tesfay\footnote{Corresponding author}}

\address{School of Mathematics and Statistics \& Center for Mathematical Sciences,  Huazhong University of Science and Technology\\
Wuhan,430074,
China}

\address{Department of Mathematics, Mekelle University\\
 Mekelle,P.O.Box 231,
 Ethiopia\\
amutesfay@hust.edu.cn}
\author{Daniel Tesfay}
\address{School of Mathematics and Statistics \& Center for Mathematical Sciences,  Huazhong University of Science and Technology\\
Wuhan,430074,
China}

\address{Department of Mathematics, Mekelle University\\
 Mekelle,P.O.Box 231,
 Ethiopia\\
dannytesfay@hust.edu.cn}

\author{Anas Khalaf}
\address{School of Mathematics and Statistics \& Center for Mathematical Sciences,  Huazhong University of Science and Technology\\
Wuhan,430074,
China\\
anasdheyab@hust.edu.cn}

\author{James Brannan}
\address{Department of Mathematical Sciences, Clemson University\\
Clemson, South Carolina 29634,
 USA\\
jrbrn@clemson.edu}

\begin{abstract}
In this paper we formulate a stochastic logistic fish growth model driven by both white noise and non-Gaussian noise. We focus our study on the mean time to extinction, escape probability to measure the noise-induced extinction probability and the Fokker-Planck equation for fish population $X(t)$. In the Gaussian case, these quantities satisfy local partial differential equations while in the non-Gaussian case, they satisfy nonlocal partial differential equations. Following a discussion of existence, uniqueness and stability, we calculate numerical approximations of the solutions of those equations. For each noise model we then compare the behaviors of the mean time to extinction and the solution of the Fokker-Planck equation as growth rate $r$, carrying capacity $K$, intensity of Gaussian noise $\lambda$, noise intensity $\sigma$ and stability index $\alpha$ vary. The MET from the interval $(0,1)$ at the right boundary is finite if $\lambda < \sqrt 2$. For $\lambda > \sqrt 2$, the MET from $(0,1)$ at this boundary is infinite. A larger stability index $\alpha$ is less likely leading to the extinction of the fish population.
\end{abstract}

\begin{keyword}
L\'evy motion; Brownian motion; logistic growth model; mean exit time; escape probability; Fokker-Plank equation.

\MSC[2020]-Mathematics Subject Classification: 39A50, 45K05, 65N22.
\end{keyword}
\end{frontmatter}
\section{Introduction}

A well-known model used to describe the growth or decline of a population $X(t)$ of a given biological species is the Verhulst \cite{Veysel F.(2018)} or logistic equation.
 \begin{equation}\label{Detmin}
  \frac{dX(t)}{dt}=rX(t)[1-\frac{X(t)}{K}].\tag{1.1}
 \end{equation}

In this paper we assume that $X(t)$ is the number of a certain species of fish (e.g., cod, herring, or anchovy) at time $t$ in a given area of the ocean. In this equation, the constant $r$ is referred to as the intrinsic growth rate. This is the growth rate of the population in the absence of any limiting factors. The constant $K$ is referred to as the carrying capacity of the population in the environment. This is the maximum population that the species can sustain indefinitely, given the habitat in which the species resides.

 The solution of Eq. (\ref{Detmin}) subject to the initial condition
$$  X(0)=x_{0}$$
 is
 \begin{equation}\label{SolDerm}
  X(t)=\frac{x_{0}K}{x_{0}+(K-x_{0})e^{-rt}}.\tag{1.2}
 \end{equation}

Equation (\ref{Detmin}) has two equilibrium solution, $X_u = \phi_{1}(t) = 0$, and $X_s = \phi_{2}(t) = K$. From the solution (\ref{SolDerm}) it follows that

 $$\lim_{t\rightarrow\infty}x(t)=K$$
 for any $X_{0}>0$. Hence $X_u=\phi_{1}(t)=0$ is unstable and $X_s=\phi_{2}(t)=K$ is asymptotically stable.

Sometimes it is informative to write Eq. (\ref{Detmin}) in the form
 $$ \frac{dX}{dt}=-U'(x),$$
where
$$  U(x)=-\int rx(1-\frac{x}{K})dx=-\frac{1}{2}rx^2+\frac{1}{3}\frac{r}{K}x^3$$
is the potential function for the fish population. The potential function has a local maximum at $X_u = 0$, and local minimum at $X_s = K$.

In biology, the unstable state $X_u$ corresponds to the fish free state (or the state of fish extinction), and the stable state $X_s$ corresponds to a nonzero constant fish population.

There are numerous environmental factors that affect both growth rate and carrying capacity of the fish population: food supply, predators, competing species, temperature and quality of the water, geographical constraints, disease, and so on. Since it is difficult or impossible to account for these factors in a simple model, it is useful to conceive of the fish population as a dynamical subsystem contained within a large enveloping system that we simply refer to as the environment. The influence of the environment on the subsystem is then accounted for in a population sense by treating coefficients and/or input to Eq. (\ref{Detmin}) as random variables or processes whose statistical properties are supposed to be known. The solution of the equation will be a random process, and the problem consists of finding its statistical properties as well as the statistical properties of certain functionals of the solution.

Under the effect of stochastic growth rate, the density of fish population \cite{D.M.JamesDifferentialDynamicSytem(2007)} will fluctuate on the state of stable $D=(0,\infty).$ In section (\ref{4}), we study evolution of the fish growth density in the domain $D$ and the extinction probability induced by stochastic fluctuations.

Recent works on the stochastic logistic growth  model  are mostly concerned with  the model under Gaussian noise \cite{{Jonathan H(2014)},{Md.Asaduzzaman Shah2013},{A.Tsoularis2001},{Peter Kink (2018)},{X.Mao2002}}\label{page 2} and the references therein. The logistic growth systems with L\'evy noise have attracted some  recent attention  \cite{{M.Liu(2015)},{Applebaum D(2009)},{X.Zhang(2014)},{Ruihua W.(2014)}}. In fact, L\'evy noise appears to be  more realistic than Gaussian noise, due to jumps by excitatory and inhibitory impulses caused by external disturbances in biological systems.

L. Meng and Z. Baichuan \cite{M.Liu(2015)} observed that the stochastic logistic equation driven by Brownian motion the probability of extinction is zero (permanent), but when the authors considered the L\'evy noise, they observe that the population is extinctive if the intensity of the  L\'evy jump  is greater than a threshold, but the population still is permanent if the intensity of the  L\'evy jump  is less than the threshold in their study.

In our present paper  we consider the logistic model of a fish population under the perturbations of ( non-Gaussian) L\'evy noise as well as (Gaussian) Brownian  noise,
 \begin{equation}\label{SDEBMLM1}
  dX(t)=rX(t)(1-\frac{X(t)}{K})dt+\lambda X(t) {dB(t)}+\sigma X(t)dL_{t}^{\alpha}, \qquad X(0)=x_0, \tag{1.3}
  \end{equation}
where $r>0$, is the growth rate of the fish population, $K>0$ is the carrying capacity of the environment,$ B(t)$ is the standard Brownian motion, $\lambda$ represents the intensity of Gaussian noise, $L_{t}^{\alpha}$ is an $\alpha$-stable L\'evy motion and $\sigma $ is the noise intensity. Here the noise is  multiplicative because the diffusion and the intensity coefficient depend on X(t).

In this study, we will consider the escape problem for (\ref{SDEBMLM1}). More concretely, we will study whether the system trajectory starting from the stable equilibrium point in Eq. (\ref{SDEBMLM1}) reaches other region through a boundary under the influence of $\alpha$-stable L\'evy noise. To analyse the problem we consider three different deterministic quantities that carry dynamical information of the SDE in (\ref{SDEBMLM1}). These deterministic quantities include mean first exit time (MET); escape probability (EP) and probability density function (PDF) of the Fokker-Planck equation (FPE) for the solution paths. Fortunately, these deterministic quantities can be determined by solving the nonlocal partial differential equation in the case of L\'evy noise and local partial differential equations in the case of White noise (Sec. 4). Then we numerically  calculate MET, EP and FPE of the solution stating from the escape region to the various outside region. We also examine how these quantities depend on the parameter $r$ ( the growth rate), stability index $\alpha$, and the noise intensity $\sigma $.

The organization of the paper is as follows. In Sec. \ref{2}, we introduce preliminary concepts briefly. We show that the solution of our model is exits and positive under certain conditions in Sec. \ref{3}. In Sec. \ref{4}, we define the three deterministic quantities i.e. MET, EP and FPE together with appropriate region for computing these quantities. Numerical results and biological implications of Gaussian white noise case $(\sigma = 0)$  and non-Gaussian noise case $(\lambda = 0)$ are given in Secs. \ref{5} and \ref{6} respectively. In Secs. \ref{7} and \ref{8}, we give the results and conclusions of our study.

\section{Preliminaries} \label{2}
In this section, we recall basic concepts and facts that we will need throughout our study.
{\emph{\subsection{\textbf{{Brownian motion:}}}}

Brownian motion is stochastic process ( adapted process) $\{\mathbf{B}(t),\mathfrak{F}_t; t\geq0\}$ defined on a complete probability space $(\Omega, \mathfrak{F},\mathfrak{F}_t,\mathbf{P})$. Brownian motion is named after the botanist R. Brown (1773-1858), who in 1827 studied the motion of tiny particles suspended in water. He observed that the particles moved in an erratic random fashion \cite{Grimmett}. The Brownian process, $B(t)$, satisfies the following conditions:
\cite{Karatzas}
\begin{itemize}
\item  $B(0)=0$ a.s,
\item $B(t)$ is independent increment and stationary,
\item $B(t)-B(s)$ is normally distributed with mean 0 and variance $t-s$, for $0<s<t$.i.e. $B(t)-B(s)\sim N(0,t-s)$,
\item The trajectories of $B(t)$ are nowhere differentiable and it has a continuous sample paths with probability one.
\end{itemize}
\emph{\subsection{\textbf{$\alpha$-Stable L\'evy process:}}}
A stable distribution $S_{\alpha}(\mu,\beta,\gamma)$ is the distribution for a stable random variable \cite{Applebaum Book(2009)}, where the stability index $\alpha \in (0,2)$, the skewness $\mu \in (0,\infty)$ and the shift $\gamma \in (-\infty,\infty)$. An $\alpha$-stable Levy process $L_{t}^{\alpha}$ is a non-Gaussian stochastic process satisfying the following conditions.
\begin{itemize}
\item  $L_{0}^{\alpha}=0$, a.s;

\item  $L_{t}^{\alpha}$ has stationary increments: $L_{t}^{\alpha}$-$L_{s}^{\alpha}$ and $L_{t-s}^{\alpha}$ have the same distribution $S_{\alpha}((t-s)^1/{\alpha},0,0)$;
\item $L_{t}^{\alpha}$ has independent increments: $0\leq t_{1}<t_{2}<t_{3}<...<t_{i-1}<t_{i}<\infty,$ the random variables $L_{t_{i+1}}^{\alpha}-L_{t_{i}}^{\alpha}$ are independent for each $i=1,2,...$;
\item $L_{t}^{\alpha}$ has stochastic continuous sample path: Sample paths are continuous in probability. In other words for all $\delta>0$, all $s\geq0$; the skewness $P(|L_{t}^{\alpha}-L_{s}^{\alpha}|>\delta) \rightarrow 0$ as $t\rightarrow s$.
\end{itemize}

A L\'evy process $L_t^{\alpha}$ taking values in $R=(-\infty, \infty)$ is characterized by a drift vector $\hat{b}\in R $, a non negative constant $\hat{Q}$ and a Borel measure $\nu$ defined on ${R}\setminus{\{0\}}$. The triplet $(\hat{b},\hat{Q},\nu)$ is called the generating triplet of L\'evy motion $L_t^{\alpha}$. According to the L\'evy -It$\hat{o}$ decomposition $L_t^{\alpha}$ \cite{duan2015introduction} can be expressed as
\begin{equation}\label{Ito-Dec}
 L_t^{\alpha}=\hat{b}t+ B_{\hat{Q}}(t)+\int_{|y|<1}y\tilde{N}(t,dy)+\int_{|y|\geq 1}yN(t,dy), \tag{2.1}
\end{equation}
where $N(t,dy)$ is the independent  Poisson random measure on $R^+\times{{R}\setminus{\{0\}}}$, $\tilde{N}(t,dy)=N(t,dy)-\nu(dy)dt$ is the compensated Poisson random measure, $\nu(S)=E(N(1,S))$ is the jump measure, and $B(t)$ is the an independent standard 1-dimensional Brownian motion.

The  L\'evy-Khinchin formula for any  L\'evy process has a specific form for its characteristic function. In other words for $0\leq t<\infty$, $u \in R$,
\begin{equation*}
\mathbb{E}[e^{(iu{L_t})}] = e^{(t\psi(u))},
\end{equation*}
where
$$\psi(u)=iu\hat{b}-\frac{\hat{Q}}{2}u^2 + \int_{{R}\setminus{\{0\}}}(e^{iuz}-1-iuzI{_{|z|<1}})\nu(dz).$$
 where $\nu$ is a L\'evy measure which is defined by
 $$\nu(du)=c(1,{\alpha})\frac{1}{|u|^{1+\alpha}}du $$}
 where $ c(1,\alpha) = \alpha\frac{\Gamma(\frac{1+\alpha}{2})}{{2^{1-\alpha}\pi^{\frac{1}{2}}}\Gamma{(1-\frac{\alpha}{2})}}$ and $\Gamma$ is the Gamma function. The function of the L\'evy measure is to describe the expected number of jumps of a certain size at a time interval 1. Usually, the parameter $\alpha$ is called the index of stability with the value $0 < \alpha < 2$.

In the case of a one-dimensional $\alpha $-stable L\'evy motion, the drift vector $\hat{b}=0$, the diffusion $\hat{Q}=0$.

In our paper, we focus on jump process with a specific size in generating triplet $(0,0,\nu_{\alpha})$ for the random variable $S_{\alpha}$ which can be defined by $\Delta L_{t}^{\alpha}=L_{t}^{\alpha}-L_{t^-}^{\alpha}<\infty, t \geq 0,$ where $L_{t-}^{\alpha}$ is the left limit of the Le\'vy motion in $ R=(-\infty, \infty)$ at any time $t$.
\section{Existence and uniqueness of the positive solution}\label{3}

In investigating the dynamical behavior of a logistic growth model, we will show the existence and uniqueness of the positive solution the stochastic differential equation. Noting that $X(t)$ of the SDE in (\ref{SDEBMLM1}) denotes the size of fish population, so it should be positive. To guarantee that the SDE has a unique solution for a given initial value $x{_0}$, the coefficients of the equation are generally required to satisfy both the local lipschitz condition and linear growth condition.

 In this work, we will focus on the logistic fish growth model given in equation (\ref{SDEBMLM1}). According to Eq. (\ref{Ito-Dec}), we can rewrite Eq. (\ref{SDEBMLM1}) as follows
\begin{align}\label{SDEBMLM2}
  &dX(t)=f(X(t))dt+g(X(t)){dB(t)}+\int_{|y|<1}h(X(t))y\tilde{N}(dt,dy)\nonumber\\
  &+ \int_{|y|\geq 1}h(X(t))yN(dt,dy), \qquad X(0)=x_0.\tag{3.1}
  \end{align}
By D. Applebaum's book  \cite{Applebaum Book(2009)}, the large jump in the term (\ref{SDEBMLM2}) is omitted and our study focus with small jumps, so we can modify Eq. (\ref{SDEBMLM2}) as:
\begin{equation}\label{SDEBMLM3}
  dX(t)=f(X(t))dt+g(X(t)){dB(t)}+\int_{|y|<1}h(X(t))y\tilde{N}(dt,dy) \qquad X(0)=x_0.\tag{3.2}
  \end{equation}

The integral form of the SDE in (\ref{SDEBMLM3}) is
 \begin{align*}
 &X(t)=X(0)+\int_{0}^{t}f(X(s))ds+\int_{0}^{t}g(X(s)){dB(s)}\nonumber\\
 &+\int_{0}^{t}\int_{|y|<1}h(X(s))y\tilde{N}(ds,dy),
  \end{align*}
where
  $ f(X(t))=rX(t)(1-\frac{X(t)}{K})$ is a deterministic vector field, $g(X(t))=\lambda X(t)$ is diffusion coefficient with intensity of Gaussian noise $\lambda$, and $h(X(t))=\sigma X(t)$ is the noise intensity term, and $\sigma $ is the noise intensity.

Before we state the exitance and uniqueness theorem, we need to look the following assumptions on the vector field $f(x)$, diffusion coefficient $g(x)$ and noise intensity term $h(x)$.\\
\textbf{Assumption 1.} (Local Lipschitz condition): \cite{{Applebaum Book(2009)},{duan2015introduction},{Fima C2005introduction}}
 The terms $f, g $ and $\sigma$ satisfy the locally Lipschitz condition if $\forall T$, $\forall N > 0$, $\forall |x_j|,\leq N$,  for $ j=1,2,\forall t\in[0,T],$ there exists a positive $\tau$ such that\\
 $|f(x_1)-f(x_2)|^2 + |g(x_1)-g(x_2)|^2 +\int_{|y|<1}|h(x_1)y-h(x_2)y|^2\nu(dy)\leq \tau|x_1-x_2|^2,$  for all $y \in R.$\\
\textbf{Assumption 2.}
(Linear Growth Condition): \cite{{Applebaum Book(2009)},{duan2015introduction},{Fima C2005introduction}} $\forall T, \forall N > 0, \forall |x| \leq N$ there exists $L > 0$ , $ \forall t\in[0,T],$ such that\\
$|f(x)|^2 +|g(x)|^2 + \int_{|y|<1}|h(x)y|^2\nu(dy)\leq L (1+|x|^2),$   for all $ y\in R.$\\
\textbf{Assumption 3.}
The function $f(x)$ is continuous in $x\in R_+=(0,\infty)$.\\
\textbf{Assumption 4.}
A function $h(x,y)=h(x)y$ is a measurable function and $x \mapsto h(x,y)$ is continuous for $y\in\{y:1\leq|y|\}$.

\begin{theorem}\label{Thrm1}
( Existence and Uniqueness of the solution $X(t)$) (J.Duan,\cite{duan2015introduction}):
 If Assumption 1 - Assumption 4 hold, then the SDE in (\ref{SDEBMLM3}) with the standard initial condition has a unique  global solution $X(t)$. The solution $X(t)$ is adapted and c\'ad\'ag.  Assume ${\mathrm{E}} \|x_0\|^2 < \infty,$ then ${\mathrm{E}} \|X(t)\|^2 < 0,$ and there exits a positive $k(t)$ for all $t>0$ such that
 $${\mathrm{E}} \|X(t)\|^2 < k(t)(1+\|x_0\|^2).$$
\end{theorem}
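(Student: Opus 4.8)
The plan is to construct the solution by a Picard--Lindel\"of iteration in the space of adapted, c\`adl\`ag processes on a fixed horizon $[0,T]$ equipped with the norm $\|X\|_{\mathcal{S}^2}^2=\mathrm{E}\sup_{0\le s\le T}|X(s)|^2$, and then to extend the construction to all of $[0,\infty)$ by localization. First I would set $X^{(0)}(t)\equiv x_0$ and define, for $n\ge 0$,
\begin{equation*}
X^{(n+1)}(t)=x_0+\int_0^t f(X^{(n)}(s))\,ds+\int_0^t g(X^{(n)}(s))\,dB(s)+\int_0^t\!\!\int_{|y|<1} h(X^{(n)}(s))\,y\,\tilde N(ds,dy).
\end{equation*}
Each iterate is manifestly adapted, and the compensated jump integral is a martingale with c\`adl\`ag paths, so c\`adl\`agness propagates through the iteration. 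The central analytic tools are It\^o's isometry for the Brownian integral and the L\'evy--It\^o isometry for the compensated Poisson integral,
\begin{equation*}
\mathrm{E}\Big|\int_0^t\!\!\int_{|y|<1} h(X(s))\,y\,\tilde N(ds,dy)\Big|^2=\int_0^t\!\!\int_{|y|<1}\mathrm{E}|h(X(s))|^2\,|y|^2\,\nu(dy)\,ds,
\end{equation*}
which is finite because $\int_{|y|<1}|y|^2\,\nu(dy)=c(1,\alpha)\int_{|y|<1}|y|^{1-\alpha}\,dy<\infty$ for every $\alpha\in(0,2)$. This integrability, ensured precisely by discarding the large jumps as in (\ref{SDEBMLM3}), is what makes the whole second-moment analysis possible.

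Next I would establish two quantitative estimates. Using Assumption 2 together with the two isometries and the Burkholder--Davis--Gundy inequality to control the running suprema, one obtains a uniform-in-$n$ bound of the form $\mathrm{E}\sup_{s\le t}|X^{(n)}(s)|^2\le C_1+C_2\int_0^t \mathrm{E}\sup_{u\le s}|X^{(n)}(u)|^2\,ds$, so that Gronwall's inequality yields $\sup_n\|X^{(n)}\|_{\mathcal S^2}^2<\infty$ and, applied to the limit process, the asserted moment bound $\mathrm{E}|X(t)|^2\le k(t)(1+\|x_0\|^2)$. Then, using Assumption 1 (local Lipschitz) in place of the growth bound, the same combination of isometries, BDG and Gronwall gives $\mathrm{E}\sup_{s\le t}|X^{(n+1)}(s)-X^{(n)}(s)|^2\le C\int_0^t \mathrm{E}\sup_{u\le s}|X^{(n)}(u)-X^{(n-1)}(u)|^2\,ds$, whence the iterates form a Cauchy sequence in $\mathcal S^2$. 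Its limit $X$ is adapted and c\`adl\`ag, and passing to the limit under the integral signs (justified by the $L^2$ convergence and the isometries) shows that $X$ solves (\ref{SDEBMLM3}). Uniqueness follows by applying exactly the same Lipschitz--Gronwall estimate to the difference of two solutions, forcing it to vanish in $\mathcal S^2$.

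The main obstacle is the passage from a local to a \emph{global} solution, since Assumptions 1--2 are genuinely local in nature; indeed the logistic drift $f(x)=rx(1-x/K)$ grows quadratically and cannot satisfy a global Lipschitz or linear-growth bound. I would handle this by the standard localization device: for each $N$ introduce bounded modifications $f_N,g_N,h_N$ agreeing with $f,g,h$ on $\{|x|\le N\}$, run the contraction argument above to obtain a unique solution $X_N$, and set the stopping times $\tau_N=\inf\{t\ge 0:|X_N(t)|\ge N\}$. Consistency of the $X_N$ on $[0,\tau_N)$ produces a maximal solution up to the explosion time $\tau_\infty=\lim_N\tau_N$, and the moment estimate above—controlled through the linear-growth constant—shows that $\mathrm{E}|X(t\wedge\tau_N)|^2$ stays bounded uniformly in $N$, so a Chebyshev argument forces $\tau_\infty=\infty$ almost surely. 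This non-explosion step is the delicate part; the remainder is the routine isometry-plus-Gronwall machinery.
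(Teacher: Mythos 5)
Your Picard--localization skeleton follows the same broad route as the references the paper itself invokes: the paper's own ``proof'' is only two sentences long, observing that local Lipschitz continuity yields a unique local solution up to the explosion time and then deferring the globality argument to Applebaum (\cite{Applebaum Book(2009)}, Theorem 6.2.3), Duan (\cite{duan2015introduction}, Theorem 7.26) and X.~Zhang \cite{X.Zhang(2014)}. Your contraction argument on $[0,T]$ --- the two isometries, the Burkholder--Davis--Gundy inequality, Gronwall, and the truncation $f_N,g_N,h_N$ producing consistent solutions $X_N$ up to $\tau_N$ --- is correct and is essentially the content of those cited theorems.

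The genuine gap is in your non-explosion step, and it is fatal as written. You correctly note that the logistic drift $f(x)=rx(1-x/K)$ admits no global linear-growth bound, yet two sentences later you claim that the second-moment estimate is ``controlled through the linear-growth constant'' uniformly in $N$; these two statements contradict each other. The paper's Assumption~2 is only local (the constant $L$ may depend on $N$), so your Gronwall constant blows up with $N$ and the Chebyshev argument yields nothing: $\lim_N\tau_N=\infty$ does not follow. What actually rescues the theorem is the sign structure of the equation, which your argument never uses. Since $f(0)=g(0)=h(0)=0$ and the noise is multiplicative, a solution started at $x_0>0$ remains in $(0,\infty)$ (for the jump part one needs $1+\sigma y>0$ on $|y|<1$), and on $(0,\infty)$ the drift is dissipative: $2xf(x)=2rx^2(1-x/K)\le 2rx^2$, a one-sided linear-growth bound that holds even though the two-sided one fails. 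Non-explosion is then proved by applying It\^o's formula to a Lyapunov function such as $V(x)=x-1-\ln x$ on $(0,\infty)$, showing $\mathrm{E}\,V\bigl(X(t\wedge\tau_N)\bigr)\le C(t)$ uniformly in $N$ and using $V(x)\to\infty$ as $x\downarrow 0$ or $x\uparrow\infty$; this is precisely the mechanism in the Gilpin--Ayala argument of \cite{X.Zhang(2014)} to which the paper points. That some use of positivity is unavoidable can be seen concretely: for $x_0<0$ the deterministic logistic equation itself blows up to $-\infty$ in finite time, so no proof that ignores the sign of $x_0$ --- as yours does --- can possibly establish $\tau_\infty=\infty$.
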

\begin{proof}
Since the coefficients of the SDE are locally Lipschitz continuous for any $x_0$, there is a unique local solution $X(t)$ on $[0,T]$, where $T$ is the explosion time. We need to show this solution is global, i.e, (to show $T=\infty)$ . The proof this theorem is similar to D. Applebaum (\cite{Applebaum Book(2009)}, Theorem 6.2.3), J. Duan (\cite{duan2015introduction}, Theorem 7.26), and X. Zhang \cite{X.Zhang(2014)}.
\end{proof}
\begin{remark}
The stochastic differential equation SDE given in Eq.(\ref{SDEBMLM3}) satisfies the above Assumptions  and Theorem (\ref{Thrm1}). Thus SDE has a unique positive solution.
\end{remark}
\begin{remark}
  The generator of the stochastic differential equation \cite{{M.L.Hao(2014)},{Yanjie Zhang(2020)}} in (\ref{SDEBMLM3}) is given by
\end{remark}
\begin{align}\label{Generator}
  &A\varphi(x)=f(x)\varphi'(x)+\frac{1}{2}{g(x)}^2\varphi''(x)\nonumber\\
  &+|h(x)|^{\alpha}\int_{\mathbb{R}\setminus{\{0\}}}[\varphi(x+z)-\varphi(x)-z\varphi'(x)I_{|z|<1}(z)]\nu(dz). \tag {3.3}
   \end{align}
\section{Deterministic quantities }\label{4}
In this section, we present numerical schemes for solving three deterministic quantities: mean exit time (MET), escape probability (EP) and the Fokker-Plank equation (FPE).
\emph{\subsection{\textbf{Mean exit time (MET)}}}
Consider the initial value problem\\
\begin{equation*}
dX_t=f(X_t)dt+g(X_t)dB(t)+\sigma(X_t)dL_t^\alpha, \qquad  X_0=x_0 \in (0,K)
\end{equation*}
where $K > 0$ is the carrying capacity of the fish population.\\
The mean exit time (MET) $u(x)\geq 0,$ for an orbit starting at $x$, from the domain $D$ denoted by
\begin{equation*}
u(x)=\mathbb{E}(inf\{{ t{\geq}0 :X_t({\omega}, x)\} \in  D^c)} , \qquad X_0=x
\end{equation*}
is helpful to quantify the dynamic behaviors of the SDE driven by the symmetric $\alpha$-stable L\'evy process. Here $D^c$ is the compliment of the set $D$ in $R$.\\
 The MET $u(x)$ satisfies the following integral-differential equation. \cite{Brannan J.1999}\\
 \begin{equation}\label{MET01}
 Au(x)=-1,\qquad x\in D,\tag{4.1}\\
 \end{equation}
\begin{equation}\label{MET02}
   u(x)=0,\qquad x\in D^c,\tag{4.2}
\end{equation}
where the generator $A$ is
 \begin{align}\label{MET03}
 &Au(x)=f(x)u'(x)+\frac{1}{2}{g(x)}^2u''(x)\nonumber\\
 &+|h(x)|^{\alpha}\int_{\mathbb{R}\setminus{\{0\}}}(u(x+z)-u(x)-zu'(x)I_{|z|<1}(z))\nu(dz)=-1,\tag{4.3}
\end{align}
for $x\in D.$ Equation (\ref{MET02}) is a non-local Dirochelet condition for the exterior interval $D^c$.

The solution to Eq. (\ref{MET01})-(\ref{MET02}) gives the mean exit time for the fish population to either become extinct if exit occurs at $x=0$ or recovery to its carrying capacity if exit occurs at $x=K$.

\begin{remark}
The numerical simulation of Eq. (\ref{MET01}), Eq. (\ref{MET02}) and Eq. (\ref{MET03}) can be done by a similar method to that in \cite{gao2014mean}.
\end{remark}
\emph{\subsection{\textbf{Escape probability (EP)}}}
In this subsection we present how to quantify EP of the dynamic progression of the stochastic differential equation in (\ref{SDEBMLM2}). Let us start by defining it.

The likelihood fish population $X_t$, starting at a point x$_0$ in the domain D=(0,K), exits $D$ in a finite time and lands in a subset $ E\subseteq D^c$ is called escape probability (EP). The EP denoted by $P_{E}(x)$, satisfies the differential-integral equation \\

\begin{equation*}
Ap_{E}(x) = 0,  \qquad x\in D,
\end{equation*}
with Dirichlet boundary condition\\
\[
P_{E}(x)=
\begin{cases}
1, \qquad x\in E,\\
0, \qquad x\in D^c/ E,
\end{cases}
\]
where $A$ is defined in Eq. (\ref{Generator}).\\

In our work, we are interested in the effect of noise on extinction probability, so we take $E=(-\infty, 0]$. Because in this interval the fish population goes to extinct.

\subsection{\emph{\textbf{Fokker-Plank equation (FPE)}}}
The Fokker-Plank equation(FPE) is an important deterministic tool for quantifying the behavior of a stochastic dynamic system.
The FPE of the SDE driven by non-Gaussian noise only in Eq. (\ref{SDEBMLM3}) in terms of the probability density function $P(x,t)$, for the solution $X_t$ with the given initial condition $X(0)=x_0$, \cite{{T. Daniel(2019)},{T.Gao2016}}satisfies\\
\begin{equation*}
 p_t(x,t)=A^{*}p(x,t), \qquad x\in D,
 \end{equation*}
\begin{equation*}
p(x,0)=\delta(x-x_0), \qquad x_0\in D,
\end{equation*}
where $\delta $ is the  dirac function and $A^*$, the adjoint operator of A in Hilbert space $L^2(R)$, obtained by solving
$$\int_{\mathbb{R}\setminus{\{0\}}}A\varphi (x)V(x)dx=\int_{\mathbb{R}\setminus{\{0\}}}\varphi (x)A^*V(x)dx,$$
for $\varphi $ ,V in the domain of definition for the operator A and $A^*$. We find that $$A^*V(x)=\int_{\mathbb{R}\setminus{\{0\}}}[|h(x+z)|^{\alpha}V(x+z)-|h(x)|^{\alpha}V(x)]\nu(dz).$$
Therefore we have  \cite{Yanjie Zhang(2020)}
\begin{align}\label{FPEG}
&\frac{\partial p}{\partial t}=-\frac{\partial}{\partial x}(f(x)p(x,t))+\frac{1}{2}\frac{\partial ^2}{\partial x^2}(g(x)^2p(x,t))\nonumber\\
&+\int_{\mathbb{R}\setminus{\{0\}}}[|h(x+z)|^{\alpha}p(x+z)-|h(x)|^{\alpha}p(x)]\nu (dz).\tag{4.4}
\end{align}
\section{ Gaussian white noise case ($\sigma=0)$}\label{5}

Consider the standard stochastic logistic growth equation driven by Gaussian noise \cite{K Lundquist(2011)}.
\begin{align}\label{SDEMB1}
d\tilde{X}(\tilde{t})=r\tilde{X}(\tilde{t})(1-\frac{\tilde{X}(\tilde{t})}{K})d\tilde t+\tilde\lambda \tilde{X}(\tilde{t})dB(\tilde{t}) \qquad  X(0)=x_0.\tag{5.1}
 \end{align}
Now let's non-dimensionlize Eq. (\ref{SDEMB1}) using the following scalings. Define\\

\qquad \qquad $t=\tilde{t} r$, \qquad $X=\frac{\tilde{X}}{K},$  and $\lambda=\frac{\tilde{\lambda}}{\sqrt r}$.\\
Since $\mathbb{E}[dB(\frac{t}{r})]^2=\frac{dt}{r}=\mathbb{E}[\frac{1}{\sqrt r}dB(t)]^2.$ In other words $dB(\frac{t}{r})= \frac{1}{\sqrt r}dB(t)$.

 Using these scaling, Eq. (\ref{SDEMB1}) is transformed into

\begin{equation}\label{SDEMB2}
dX(t)=X(t)[1-X(t)]dt+\lambda X(t)dB(t),\qquad  X_0=x.\tag{5.2}
 \end{equation}

\emph{\subsection{\textbf{Exact solution of the stochastic differential equation}}}

Applying Ito's formula to $Y(t) = X^{-1}(t)$ \cite {Md.Asaduzzaman Shah2013} gives the linear initial value problem for $Y(t)$

\begin{equation}\label{ItoForm}
dY(t)=[(\lambda^2 - 1)Y(t) + 1]dt - \lambda Y(t)dB(t),\tag{5.3}
 \end{equation}
 $$ Y(0)=\frac{1}{x_0}.$$
According to Duan's book (\cite{duan2015introduction}, Example 4.22) the Eq. (\ref{ItoForm}) is linear stochastic differential equation with $ a_1=\lambda^2 - 1, a_2=1, b_1= - \lambda ,b_2=0$. ( In fact in (\cite{duan2015introduction} $a_1, a_2, b_1$ and $b_2$ are time depended).\\
The solution of Eq. (\ref{ItoForm}) is \cite{{P.E.Kloeden},{V.M(2011)}}

$$Y(t)= \phi(t) \{Y _0 +\int_{0}^{t} e^{(-\frac{1}{2}\lambda^2+1)s+\lambda B(s)}ds \}.$$\\
where $\phi(t)=e^{(\frac{1}{2}\lambda^2-1)t-\lambda B(t)}$ is the fundamental solution.\\
Consequently, the unique, strong solution of Eq. (\ref{SDEMB2})is
\begin{equation}\label{FinalSol}
 X(t)=Y^{-1}(t)=\frac {x_0e^{(1-\frac{\lambda^2}{2})t+\lambda B(t)}}{{1+x_0 \int_{0}^{t} e^{(1-\frac{\lambda^2}{2})s+\lambda B(s)}}ds}.\tag{5.4}
\end{equation}
From Eq. (\ref{FinalSol}), we see that the solution exists for all $t>0$ and if $x_0 >0$, then $X(t) > 0$ a.s. for all $t>0$. Let's rewrite Eq.(\ref{FinalSol}) in the form

\begin{equation}\label{FinalSol2}
X(t)= \frac{x_0e^{(1-\frac{\lambda^2}{2})t \left(1+\frac{\lambda B(t)}{{(1-\frac{\lambda^2}{2}) t}}\right)}}{1+x_0 \int_{0}^{t} e^{(1-\frac{\lambda^2}{2})s+\lambda B(s)}ds}            \tag{5.5}
 \end{equation}
According to V. Mackevičius's book (\cite{V.M(2011)}, Sec.11.4), we have
\begin{equation}\label{StrongNum}
\lim_{t\rightarrow\infty}\frac{B(t)}{t}=0,  \qquad a.s.\tag{5.6}
\end{equation}
If $x_0>0$ and $\lambda>\sqrt 2$, it follows from Eq.(\ref{FinalSol2}) and (\ref{StrongNum}) that\\
\begin{equation*}
\lim_{t\rightarrow\infty}X(t)=0, a.s.
\end{equation*}
Note that
$$\lim_{\lambda\rightarrow 0}X(t)=\frac{x_0}{x_0+e^{-t}(1-x_0)},$$
the solution of the deterministic logistic equation.

\subsection{\textbf{The Fokker-Planck equation and its stationary density}}

The transition density function $p(t,y/x)$ for the process $\{X(t),t>0\}$ as in Eq. (\ref{FPEG}) satisfies the equation
\begin{equation}\label{fpe}
\frac{\partial p}{\partial t}=-\frac{\partial}{\partial x}[x(1-x)p]+\frac{1}{2}\lambda^2\frac{\partial^2}{\partial x^2}[x^2p].\tag{5.7}
\end{equation}

The stationary density $q(x)=\lim _{t \rightarrow \infty} p(x,t/x_0)$, if it exists, satisfies the second order ODE
\begin{equation}\label{stratBM}
-\frac{\partial}{\partial x}[x(1-x)p]+\frac{1}{2}\lambda^2\frac{\partial^2}{\partial x^2}[x^2p]=0.\tag{5.8}
\end{equation}

Equation (\ref{stratBM}) has two linearly independent solutions\\

                  \qquad  $q_1(x)=x^{2(1-1/\lambda^2)}e^{-2x/\lambda^2}$  \qquad and \qquad $q_2(x)=1,$
so the general solution of Eq. (\ref{stratBM}) is
$$q=c_1q_1(x)+c_2q_2(x).$$

The requirement that $\int^{\infty}_{0} q(x)dx=1$ implies that $c_2=0$ and
\begin{equation}\label{fpe2}
q=\frac{x^{2(1/\lambda^2-1)}e^{ -2x/\lambda^2}}{\int_{0}^{\infty}x^{2(1/\lambda^2-1)}e^{ -2x/\lambda^2}dx}.   \tag{5.9}                                                  \end{equation}
provided the integral $\int_{0}^{\infty}x^{2(1/\lambda^2-1)}e^{ -2x/\lambda^2}dx $ exits. For value of $x$ near 0 the approximation\\
\begin{figure}
  \begin{center}
    \includegraphics[width=0.6\linewidth]{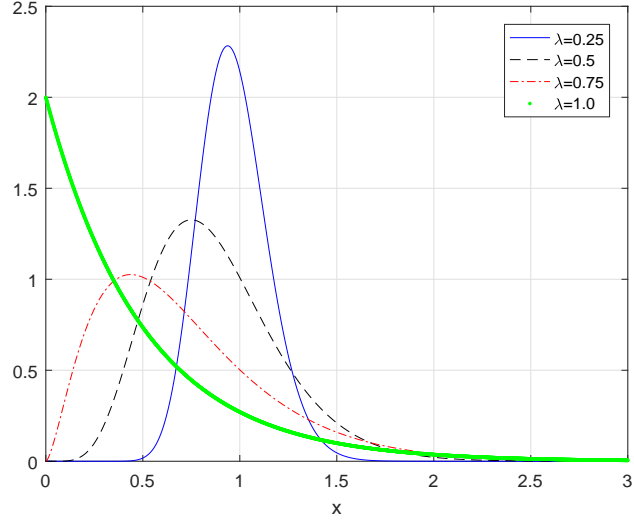}
    \caption{The stationary density of FPE of Eq. (\ref{fpe}) for $\lambda = 0.25, 0.5, 0.75,$ and 1.0.}
    \label{FPE1}
    \end{center}
  \end{figure}

\begin{equation*}
\int_{0}^{x}\eta^{2(1/\lambda^2-1)}d\eta = x^{2(\frac{1}{\lambda^2}-1)}
\end{equation*}
shows that the integral is finite if and only if $\lambda < \sqrt 2$. We note that
\begin{equation*}
\lim_{\lambda\rightarrow\sqrt 2}q(x;\lambda)=\delta(x).
\end{equation*}
The parameter $\lambda= \tilde{\lambda}/\sqrt{r}$ is the ratio of dispersion intensity to the square root of the
growth rate.  When $\sqrt{r}$ is large relative to $\tilde{\lambda}$ the stationary density is unimodal and peaks near the carrying capacity $x=\tilde{x}/K=1$, but as noise intensity $\tilde{\lambda}$ increases relative to $\sqrt{r}$ the stationary probability mass density skews left toward $x=0$.  When $\lambda$ is sufficiently large the term $X(t)(1-X(t))$ in Eq.~(5.2) that controls drift towards $x=1$ becomes increasingly negligible relative to dispersive noise $\lambda X(t)dB(t)$.  However, the presence of $X(t)$ in $\lambda X(t)dB(t)$
in effect causes the region near $x=0$ to be a region of stagnation for sample trajectories.  Although
$X(t)$, subject to large fluctuations may in the course of time achieve large values of $x$, the majority of time is spent in the region near the origin.

Whenever $0<\lambda<\sqrt{2}$ and $X(0)=x_0>0$, the fact that a stationary density exists suggests the following asymptotic behavior of the solution $p(x,t|x_0; \lambda)$ of the Fokker-Planck equation
\[
\lim_{t \rightarrow \infty}p(x,t|x_0; \lambda) = q(x;\lambda),
\]
and that the probability of exit from the domain $D=(0,\infty)$ at $x=0$ in finite time is 0.
However, in the limiting case $\lambda = \sqrt{2}$, the fact that $q(x;\sqrt{2}) = \delta(x)$ suggests that
all solutions ultimately exit at $x=0$ in finite time.
From Section 5.1 we also know that when $\lambda > \sqrt{2}$,
$\lim_{t \rightarrow \infty} X(t) = 0$ a.s.  In population biology terms, we conclude that for the model under discussion,
when $\lambda \ge \sqrt{2}$, all populations ultimately become extinct,
but when $0< \lambda < \sqrt{2}$, extinction cannot occur. ( see Figure \ref{FPE1}).

\subsection{\textbf{Probability of exit and mean exit time}}
For all values of $\lambda>0$ a fundamental set of solutions of
\begin{equation}\label{singulareqn}
\frac{1}{2} \lambda^2 x^2 u'' + x(1-x) u' = 0 \tag{5.10}
\end{equation}
is
\[
u_1(x) = 1 \quad \mbox{and} \quad u_2(x;L) = -\int_x^L \eta^{-2/\lambda^2} e^{2 \eta/ \lambda^2}\;d\eta.
\]
It is of interest to note the following infinite series representations for nonconstant solutions of
Eq. (\ref{singulareqn})
\begin{eqnarray*}
u_3(x) & = &x^{1-2/\lambda^2}e^{2x/\lambda^2}\left\{1 -\frac{2}{\lambda^2}\frac{1}{2-2/\lambda^2} x
+ \left(\frac{2}{\lambda^2}\right)^2\frac{1}{2-2/\lambda^2}\cdot\frac{1}{3-2/\lambda^2}x^2+\cdots \right\}
\nonumber\\ & & \mbox{if } 2/\lambda^2 \notin \left\{1,2,3, \ldots \right\},
\end{eqnarray*}
\begin{align*}
&u_4(x) = \sum_{n=0}^{M-2} \frac{M^n}{(n-M+1)n!}x^{n-M+1} + \frac{M^{M-1}}{(!M-1)} \ln x \nonumber\\
&+ \sum_{n=M}^{\infty} \frac{M^n}{(n-M+1)n!}x^{n-M+1},
\qquad \mbox{if } 2/\lambda^2 = M \in \left\{2,3,4, \ldots \right\},
\end{align*}
and
\begin{equation*}
u_5(x) = \ln x + \sum_{n=1}^{\infty} \frac{1}{n\cdot n!}x^n \quad \mbox{if } 2/\lambda^2 =1.
\end{equation*}
Note that
\begin{equation*}
u_2(x;L) = \left\{ \begin{array}{cl}
u_3(x)-u_3(L), & 2/\lambda^2 \notin \left\{1,2,3, \ldots \right\}, \\
u_4(x) - u_4(L), & 2/\lambda^2 = M \in \left\{2,3,4, \ldots \right\},\\
u_5(x) - u_5(L), & 2/\lambda^2 =1.
\end{array}\right.
\end{equation*}
It follows that $u_2(x;L)$ is singular when $2/\lambda^2 \ge 1$, that is, when $\lambda \le \sqrt{2}$, with
asymptotic behavior
\[
u_2(x;L) \sim \left\{ \begin{array}{cl} x^{1-2/\lambda^2} \mbox{ as } x \rightarrow 0, & \lambda < \sqrt{2}, \\
\ln x \mbox{ as } x \rightarrow 0, & \lambda = \sqrt{2}.
\end{array}\right.
\]
as $x \rightarrow 0$.
When $\lambda > \sqrt{2}$, a fundamental set of solutions of Eq. (\ref{singulareqn}) is
$\left\{u_1(x), u_3(x) \right\}$ where $u_3(x)$ is not singular at $x=0$.  Its asymptotic behavior at the
origin is given by
\[
u_3(x) \sim x^{1-2/\lambda^2}\rightarrow 0, \quad x \rightarrow 0.
\]
\emph{\subsubsection{Probability of exit at x=0}}\label{ExitProbSection}
Given that $0 < \epsilon< L$ and that $X(0)=x \in D_\epsilon = (\epsilon, L)$, the probability $P(x)=P(x;\epsilon,L)$
of exit at $x=\epsilon$ before exit at $x=L$ satisfies the boundary value problem
\begin{equation*}
\frac{1}{2} \lambda^2 x^2 P''+x(1-x)P' = 0, \qquad P(\epsilon)=1, \quad P(L)=0.
\end{equation*}
Substituting the general solution $P(x) = c_1u_1(x) +c_2u_2(x)$ into the boundary conditions gives
\begin{equation*}
P(x;\epsilon,L) = \frac{\int_x^L \eta^{-2/\lambda^2} e^{2 \eta/ \lambda^2}\;d\eta}{\int_\epsilon^L \eta^{-2/\lambda^2} e^{2 \eta/ \lambda^2}\;d\eta}.
\end{equation*}
 We want to examine the behavior of $P(x;\epsilon,L)$ as $\epsilon\rightarrow 0$. Convergence of the integral $\int_{\epsilon}^{L}\eta^{{-2}/{\lambda^2}}e^{ {2\eta}/{\lambda^2}}d\eta$ is examined by approximating the integral for small value of $x$.

  \[ \int_{\epsilon}^{x}\eta^{{-2}/{\lambda^2}}e^{ {2\eta}/{\lambda^2}}d\eta\approx\int_{\epsilon}^{x}\eta^{{-2}/{\lambda^2}}d\eta=
\begin{cases}
\frac{1}{1-\frac{2}{\lambda^2}}[x^{1-\frac{2}{\lambda^2}}-\epsilon^{1-\frac{2}{\lambda^2}}], \qquad \lambda\neq\sqrt 2,\\
\ln (x)-\ln (\epsilon),     \qquad      \qquad     \lambda=\sqrt 2,
\end{cases}
\]

It follows that if $\lambda\leq\sqrt 2$, then the integral in the denominator diverges so
\begin{equation*}
\lim_{\epsilon \rightarrow 0} P(x;\epsilon,L) = 0.
\end{equation*}
In other words, if $0< \lambda \le \sqrt{2}$, for each $L>0$, starting from $x\in(0,L)$ the probability of exit at $x=0$ in finite time is zero, that is, this boundary is not accessible.  Then, starting from
$x \in (0, L)$, the probability of hitting the
right boundary $x=L$ in finite time is 1, and it makes sense to compute the expected time to hit $x=L$.
We do this in the next subsection.

If, on the other hand $\lambda > \sqrt{2}$
\begin{equation*}
\lim_{\epsilon \rightarrow 0}\int_\epsilon^L \eta^{-2/\lambda^2} e^{2 \eta/ \lambda^2}\;d\eta
=\int_0^L \eta^{-2/\lambda^2} e^{2 \eta/ \lambda^2}\;d\eta < \infty
\end{equation*}
so
\begin{equation*}
\lim_{\epsilon \rightarrow 0}P(x;\epsilon,L) = P(x;0,L) = \frac{\int_x^L \eta^{-2/\lambda^2} e^{2 \eta/ \lambda^2}\;d\eta}{\int_0^L \eta^{-2/\lambda^2} e^{2 \eta/ \lambda^2}\;d\eta}
= 1 - \frac{\int_0^x \eta^{-2/\lambda^2} e^{2 \eta/ \lambda^2}\;d\eta}{\int_0^L \eta^{-2/\lambda^2} e^{2 \eta/ \lambda^2}\;d\eta}>0.
\end{equation*}
In this case, for each $L>0$, there is a positive probability of exit at 0 before exit at $L$.  Due to the fact
that
\[
\lim_{L \rightarrow \infty} \int_0^L \eta^{-2/\lambda^2} e^{2 \eta/ \lambda^2}\;d\eta = \infty
\]
it follows that
\[
\lim_{L\rightarrow \infty}P(x;0,L) = 1.
\]
Thus, if $\lambda > \sqrt{2}$, starting at any $x>0$, the probability of exit from $D=(0, \infty)$ at $x=0$ is equal to 1.  In this model, even though populations may become large, they all ultimately become extinct due to high intensity noise.
\subsubsection{Probability of exit at $x=1$}
Computing the mean exit time at $x=1$ only makes sense if the probability of exit at $x=1$ in finite
time is equal to 1.
We verify this in this subsection, again in the case that $\lambda<\sqrt{2}$.  Let
$P(x;\epsilon)$ equal the probability of exit at $1$ before exit at $\epsilon$ given that $X(0)=x\in D_{\epsilon} = (\epsilon,1)$.

Then $P(x;\epsilon)$ satisfies the boundary value problem
\begin{equation}\label{ProbExit}
\frac{1}{2}\lambda^2 x^2 P'' + x(1-x)P'= 0, \quad \epsilon<x <1, \qquad P(\epsilon;\epsilon) = 0, \quad P(1;\epsilon) = 1. \tag{5.11}
\end{equation}
Substituting the general solution $P(x;\epsilon) = d_1 u_1(x) + d_2u_2(x)$ into the boundary conditions
gives
\begin{equation}\label{SolProbExit1}
P(x;\epsilon) = -\frac{u_2(\epsilon)}{u_1(\epsilon)u_2(1)-u_1(1)u_2(\epsilon)}u_1(x)+\frac{u_1(\epsilon)}{u_1(\epsilon)u_2(1)-u_1(1)u_2(\epsilon)}u_2(x).\tag{5.12}
\end{equation}
Using the facts that $u_1(x)=1$ and $u_2(1)=0$ this simplifies to
\begin{equation}\label{SolProbExit2}
P(x;\epsilon) = 1-\frac{1}{u_2(\epsilon)}u_2(x).\tag{5.13}
\end{equation}
Since $u_2(\epsilon) \rightarrow -\infty$ as $a \rightarrow 0$ when $\lambda<\sqrt{2}$, we find that
\begin{equation}
P(x) = \lim_{a \rightarrow 0}P(x;\epsilon) = 1, \qquad 0<x<1.\tag{5.14}
\end{equation}
Thus if $X(0)=x \in (0,1)$, the probability of exit at the right boundary in finite time is equal to one and it
makes sense to compute the expected exit time.
\emph{\subsubsection{Expected exit times}}
Consider a fish population that has been reduced by over harvesting or disease from its carrying capacity $K$ to a level $x<K$.  We would then be interested in the expected fish population recovery time, that is, the average time it takes the fish population to increase to the level $K$, or to a small neighborhood of $K$.
In dimensionless variables we set up the problem on the domain $D=(\epsilon,1)$ where the carrying capacity is equal to $1$ (recall dimensionless population is $x=\tilde{x}/K$).   The boundary value problem
for the expected exit time $u(x)$ is
\begin{equation} \label{METEq}
\frac{1}{2} \lambda^2 x^2 u''(x) +x(1-x)u'(x) = -1, \quad x \in D_\epsilon = (\epsilon, 1), \tag{5.15}
\end{equation}
with boundary conditions
\begin{equation}\label{METBCs}
u(\epsilon) = 0, \quad u(1) = 0. \tag{5.16}
\end{equation}
We first consider the case $\lambda < \sqrt{2}$.  From Sec. \ref{4}, we know that in this case
the probability of exit from the domain $D=(0,1)$ is equal to zero, so we are guaranteed that in the
limiting case of $\epsilon \rightarrow 0$ that exit will occur at the right boundary, $x=1$.
The general solution of Eqs. (\ref{METEq})-(\ref{METBCs}) is
\begin{equation}\label{GenSol}
u(x;\epsilon) = c_1(\epsilon)u_1(x) + c_2(\epsilon)u_2(x) + Y(x), \tag{5.17}
\end{equation}
where $Y(x)$ is a particular solution of Eq. (\ref{METEq}).  A particular solution of Eq. (\ref{METEq})
can be found by assuming the infinite series representation
\begin{equation} \label{YSeries}
Y(x) = a_0 \ln x + a_1 x + a_2 x^2 + \cdots.\tag{5.18}
\end{equation}
The form of this series is found by using the variation of parameters representation for a particular
solution of Eq. (\ref{METEq}).  Substituting the series (\ref{YSeries}) into Eq. (\ref{METEq}) and matching
coefficients of like powers of $x$ gives
\begin{eqnarray*}
a_0 & = & \frac{1}{1-\lambda^2/2}\\
a_1 & = & a_0 \\
a_2 & = & \frac{1}{2(1+\lambda^2\cdot 1/2)}\;a_1 \\
a_3 & = & \frac{2}{3(1+\lambda^2\cdot 2/2)}\;a_2 \\
\vdots & = & \vdots \\
a_{n+1} & = & \frac{n}{(n+1)(1+\lambda^2\cdot n/2)}\;a_n \\
\vdots & = & \vdots \\
\end{eqnarray*}
Thus
\begin{equation}\label{Series}
Y(x) = - \frac{1}{1-\lambda^2/2} \left(\ln x + x \right) + \sum_{n=2}^\infty a_n x^n. \tag{5.19}
\end{equation}
Substituting the general solution (\ref{GenSol}) into the boundary conditions (\ref{METBCs}) gives
\begin{align}\label{VarParm1}
&u(x;\epsilon)=\frac{u_2(\epsilon)Y(1)-u_2(1)Y(\epsilon)}{u_1(\epsilon)u_2(1)-u_1(1)u_2(\epsilon)}u_1(x)+\frac{u_1(1)Y(\epsilon)-u_1(\epsilon)Y(1)}{u_1(\epsilon)u_2(1)-u_1(1)u_2(\epsilon)}u_2(x) \nonumber\\
& + Y(x). \tag{5.20}
\end{align}
Using $u_1(x) = 1$ and $u_2(1) = 0$ this reduces to
\begin{equation}\label{VarParm2}
u(x;\epsilon)= Y(x) - Y(1) + \left[Y(1)-Y(\epsilon)\right]\frac{u_2(x)}{u_2(\epsilon)}.\tag{5.21}
\end{equation}
Now we let $\epsilon \rightarrow 0$ in Eq. (\ref{VarParm2}).  Since $u_2(\epsilon)\rightarrow \infty$
and $\ln \epsilon/u_2(\epsilon) \rightarrow 0$ we find that
\begin{equation}\label{YxminusY1}
u(x) = \lim_{\epsilon \rightarrow 0} u(x;\epsilon) = Y(x)-Y(1)\tag{5.22}
\end{equation}
is the solution of
\begin{equation}\label{METDomain01}
\frac{1}{2} \lambda^2 x^2 u''(x) + x(1-x)u'(x) = -1, \qquad x \in D=(0,1), \quad u(1)=0 \tag{5.23}
\end{equation}
in the case that $\lambda<\sqrt{2}$.  In Figure (\ref{MeanExitTimeCaption}), using parameter values $\lambda=1$ and $\epsilon=0.001$, we compare a numerical approximation to the solution of Eqs. (\ref{METEq})-(\ref{METBCs}) (dashed red curve) to the solution ({YxminusY1}) of problem (\ref{METDomain01})
(solid blue curve) in which we used a truncated series approximation of $Y(x)$ in Eq. (\ref{YSeries}).
Starting from $x \in D=(0, \infty)$, we note that the expected exit time goes to infinity as
$x \rightarrow 0$ when $0<\lambda<\sqrt{2}$.
\begin{figure}[h!]
\begin{center}
\includegraphics[width=10cm]{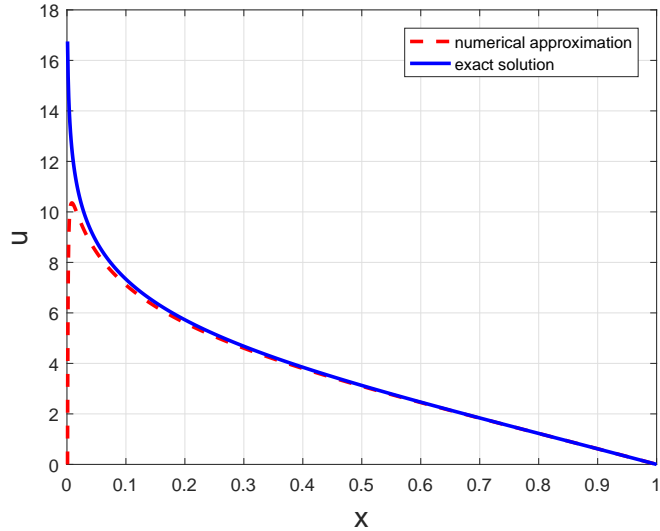}
\caption{Using $\lambda=1$, a comparison of the numerical approximation of the boundary value problem
Eqs. (\ref{METEq}) and (\ref{METBCs}) on $(\epsilon,1)$ with $\epsilon=0.001$ with a truncation series approximation of Eq. (\ref{METDomain01}) to the solution of problem (\ref{YSeries}).}
\label{MeanExitTimeCaption}
\end{center}
\end{figure}
In Section \ref{ExitProbSection} we showed that if $\lambda > \sqrt{2}$, starting at any $x>0$, the probability of exit from $D=(0, \infty)$ at the left boundary is equal to 1.  Populations may become large, but ultimately they all ultimately become extinct due to high intensity noise.  The solution of
\begin{equation} \label{METEq2}
\frac{1}{2} \lambda^2 x^2 u''(x) +x(1-x)u'(x) = -1, \quad x \in D_\epsilon = (\epsilon, L), \tag{5.24}
\end{equation}
with boundary conditions
\begin{equation}\label{METBC2s}
u(\epsilon) = 0, \quad u(L) = 0.\tag{5.25}
\end{equation}
is given by
\begin{align}\label{VarParm3}
&u(x;\epsilon,L)=\frac{u_3(\epsilon)Y(L)-u_3(L)Y(\epsilon)}{u_1(\epsilon)u_3(L)-u_1(L)u_3(\epsilon)}u_1(L)+\frac{u_1(L)Y(\epsilon)-u_1(\epsilon)Y(L)}{u_1(\epsilon)u_3(L)-u_1(1)u_3(\epsilon)}u_3(x)\nonumber\\
&+ Y(x). \tag{5.26}
\end{align}
It can be shown that
\[
\lim_{L\rightarrow \infty} u(x;\epsilon,L) = -Y(\epsilon)\left[1+Y(x)/Y(\epsilon)\right]
\]
and consequently
\[
\lim_{\epsilon \rightarrow 0} \left[ \lim_{L\rightarrow \infty}u(x;\epsilon,L)\right] = \infty.
\]
Thus, when $\lambda>\sqrt{2}$, the probability of exit from the interval $(0, \infty)$ at the left boundary is
1, but the expected exit time is infinity.

\section{Non-Gaussian L\'evy noise case $(\lambda=0$)}\label{6}
In this section, we consider the stochastic model driven by symmetric $\alpha$-stable L\'evy process. We focus on three issues: Mean exit time, escape probability, and Fokker-Plank equation to quantify the stochastic dynamics in (\ref{SDELM}).

Consider the following stochastic logistic model driven by non-Gaussian noise

\begin{equation}\label{SDELM}
 d\tilde{X} = r\tilde{X}(1-\frac{\tilde{X}}{K})dt+\sigma \tilde{X}dL_t^\alpha,\tag{6.1}
 \end{equation}
 $$ \tilde{X_0}=x_0$$

 \subsection{\textbf{Exact solution of SDE driven by non-Gaussian noise}}
  Now, let's non-dimensionlize Eq. (\ref{SDELM}) using the following scaling. Setting $X=\frac{\tilde{X}}{K}$.
 Using this scaling, Eq. (\ref{SDELM}) is transformed into
\begin{equation}\label{SDELM2}
dX(t)=rX(t)[1-X(t)]dt+\sigma X(t)dL_t^\alpha,\tag{6.2}
 \end{equation}
 $$X_0= x=\frac{ \tilde{X_0}}{K}.$$
Here, the vector field and noise intensity of SDE in (\ref{SDELM2}) satisfy Assumption 1 and Assumption 3 and this stochastic differential equation also satisfies theorem \ref{Ther} which means the SDE in (\ref{SDELM2}) has a unique positive solution.
\begin{theorem} \label{Ther}
Suppose that $r$ and $\sigma$ are positive real constant. Then there exists a unique solution $X(t)$ to Eq. (\ref{SDELM2}) for any initial value $X_0
> 0$, which is given by
\end{theorem}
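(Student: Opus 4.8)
The plan is to reduce the nonlinear logistic equation~\eqref{SDELM2} to its linear multiplicative part, paralleling the reciprocal substitution that turned the Gaussian equation into the linear form~\eqref{ItoForm}, and then to recover $X$ by a pathwise variation-of-constants argument. First I would solve the linear auxiliary equation
$$d\Phi_t = r\,\Phi_t\,dt + \sigma\,\Phi_{t^-}\,dL_t^\alpha, \qquad \Phi_0 = 1,$$
whose solution is the geometric L\'evy motion $\Phi_t = e^{rt}\,\mathcal{E}(\sigma L^\alpha)_t$, where $\mathcal{E}$ is the Dol\'eans--Dade (stochastic) exponential. Since $L^\alpha$ is pure-jump with no continuous part, $\mathcal{E}(\sigma L^\alpha)_t = \exp(\sigma L_t^\alpha)\prod_{0<s\le t}(1+\sigma\Delta L_s)\,e^{-\sigma\Delta L_s}$, which is strictly positive as long as $1+\sigma\Delta L_s>0$ at every jump.

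Second, I would seek the solution in the factored form $X_t = \Phi_t V_t$ with $V$ continuous and of finite variation. The key point is that $X$ and $\Phi$ share the \emph{same} multiplicative jump factor $1+\sigma\Delta L_s$, so $V_t = X_t/\Phi_t$ is continuous and the covariation $[\Phi,V]$ vanishes. The product rule for semimartingales then gives $dX_t = r X_t\,dt + \sigma X_{t^-}\,dL_t^\alpha + \Phi_{t^-}\,dV_t$, and matching this against~\eqref{SDELM2} cancels the stochastic terms and leaves the pathwise Bernoulli ODE $dV_t = -r\,\Phi_t V_t^2\,dt$. Integrating $d(1/V_t)=r\,\Phi_t\,dt$ with $V_0=x_0$ yields $1/V_t = 1/x_0 + r\int_0^t \Phi_s\,ds$, hence
$$X_t = \frac{x_0\,\Phi_t}{1 + r x_0\int_0^t \Phi_s\,ds},$$
the exact analogue of the Gaussian solution~\eqref{FinalSol} with the geometric Brownian factor replaced by $\Phi_t$.

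Third, I would settle positivity and uniqueness. Because $\Phi_t>0$ and the denominator $1 + r x_0\int_0^t\Phi_s\,ds>0$ whenever $x_0>0$, the formula gives $X_t>0$ for all $t$, as a population size must be. Existence is then confirmed by checking directly, via the L\'evy--It\^o change-of-variables formula, that this $X_t$ solves~\eqref{SDELM2}; uniqueness follows from Theorem~\ref{Thrm1}, whose hypotheses (local Lipschitz continuity, Assumptions~1 and~3) were noted to hold for~\eqref{SDELM2} in the sentence preceding the statement.

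The step I expect to be the main obstacle is the well-definedness and positivity of the geometric factor $\Phi_t$. For a genuine $\alpha$-stable process the jumps $\Delta L_s$ are unbounded, so $1+\sigma\Delta L_s$ may turn negative and $X_t$ could change sign or blow up; this is precisely why the model is restricted to small jumps ($|y|<1$) as in~\eqref{SDEBMLM3}, and why one needs $\sigma$ small enough that $1+\sigma y>0$ for $|y|<1$, after which $\Phi_t$ stays strictly positive and the reduction $X=\Phi V$ is rigorous. The other delicate point is verifying that no compensator or It\^o-type correction survives in the $V$-equation: this uses the absence of a continuous martingale part in $L^\alpha$ and the continuity of $V$, and it is the step most easily mishandled by naively transcribing the Brownian computation (where the correction appears as the $-\lambda^2/2$ drift in $\Phi$).
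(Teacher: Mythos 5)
Your proposal is correct in substance and, after the reduction, produces exactly the right kind of closed-form solution; but it takes a genuinely different route from the paper. The paper's proof is the jump analogue of its own Section 5.1: it rewrites Eq. (\ref{SDELM2}) with the compensated small-jump measure as in Eq. (\ref{SDEBMLM3}), sets $Y(t)=-1/X(t)$, applies the It\^o formula for jump processes to $F(x)=-1/x$ so that $Y$ satisfies a \emph{linear} (affine) SDE, and then defers the solution of that linear equation to the cited result of Z.~Huang and J.~Cao (Theorem 3.1 of \cite{Zaitang H(2018)}); this is what produces the reciprocal-type expression (6.3), with its $u/(1+u)$-type integrands. You never apply It\^o to a nonlinear transform of the unknown solution: you solve the homogeneous equation by the Dol\'eans--Dade exponential $\Phi$, factor $X=\Phi V$, use that $X$ and $\Phi$ share the jump factor $1+\sigma\Delta L_s$ so that $V$ is continuous of finite variation and $[\Phi,V]=0$, and reduce to the pathwise Bernoulli ODE $dV_t=-r\Phi_t V_t^2\,dt$. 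What your route buys: the stochastic calculus is confined to the explicitly known exponential $\Phi$, positivity of $X_t=x_0\Phi_t/\bigl(1+rx_0\int_0^t\Phi_s\,ds\bigr)$ is transparent (numerator and denominator are both positive, and the denominator is bounded below by $1$, which also rules out explosion), and the argument is self-contained. What the paper's route buys: it parallels the Gaussian computation of Eqs. (\ref{ItoForm})--(\ref{FinalSol}) verbatim and lands directly on the formula quoted from the literature.

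Two caveats, both of which you partly anticipate, apply to either route. First, the positivity condition $1+\sigma\Delta L_s>0$ that you flag is genuinely needed: with the small-jump truncation $|y|<1$ it holds automatically only when $\sigma\le 1$, so the theorem's ``any positive $\sigma$'' is too generous; the paper's own formula (6.3) suffers from the same issue, since its integrand is singular at $u=-1$. Second, your appeal to Theorem \ref{Thrm1} for uniqueness should be phrased more carefully: the quadratic drift $rx(1-x)$ violates the linear-growth Assumption 2, so Theorem \ref{Thrm1} does not apply as stated; what one actually uses is that local Lipschitz continuity gives pathwise uniqueness up to the explosion time, and your explicit formula shows the solution does not explode, whence global existence and uniqueness. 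This looseness is present in the paper's own remark preceding the theorem as well, but your write-up should close it rather than inherit it.
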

\begin{equation}
X(t) = \frac{e^{-(r+\sigma \int_{|u|\leq 1}\frac{u^2+2u}{1+u}\nu(du) )t-\sigma \int_{|u|\leq 1}^t\frac{u}{1+u}\tilde{N}(ds,du)}}{\frac{1}{X_0}-\int_{|u|\leq 1}^t re^{-(r+\sigma \int_{|u|\leq 1}\frac{u^2+2u}{1+u}\nu(du) )t-\sigma \int_{|u|\leq 1}^t\frac{u}{1+u}\tilde{N}(ds,du)}ds}.\tag{6.3}
 \end{equation}
\begin{proof}
First let's rewrite Eq. (\ref{SDELM2}) in the form of Eq.(\ref{SDEBMLM3}) as follows
\begin{equation*}
  dX(t)=rX(t)[1-X(t)]dt+\sigma \int_{|u|<1}X(t))u\tilde{N}(dt,dy) \qquad X(0)=x_0.
  \end{equation*}
Set $Y(t)=-\frac{1}{X(t)}$, apply It\^o formula to $F(x)=-\frac{1}{x}$, where $Y(t)=F(x)$.
More details of the proof of this Theorem see [Z.Huang, J.Cao (2018) \cite{Zaitang H(2018)}, Theorem 3.1].
\end{proof}
\subsection{\textbf{Mean exit time} }

Mean exit time $u(x)$ is the expected time for the fish population $X(t)$ to either become extinct if exit occurs at $x=0$ or recovery to its carrying capacity if exit occurs at $x=K$.
According to equation Eq. (\ref{MET01}), here we present a numerical scheme to solve the following nonlocal partial differential equation, in order to get the mean exit time.
\begin{align}\label{METLM1}
 Au(x) = 0, \qquad x\in D=(0,K),\tag{6.4}
 \end{align}
$$u(x)=0, \qquad x\in D^c,$$
where $A$  is the generator in Eq. (\ref{Generator}).

Let's describe the numerical algorithms of equation (\ref{METLM1}) the scheme in the paper \cite{gao2014mean}. For simplicity, we use $D=(r_1,r_2)$ instead of $D=(0,K)$, so Eq. (\ref{METLM1}) becomes
\begin{align}\label{METLM3}
 rx(1-x)u'(x)+|h(x)|^{\alpha}\int_{\mathbb{R}\setminus{\{0\}}}(u(x+z)-u(x)-zu'(x)I_{|z|<1}(z))\nu(dz)=-1,\tag{6.5}
  \end{align}
for $x\in D=(r_1,r_2)$; $u(x)=0$ for $x\in D^c$.

In is paper, we choose $\delta=min\{|r_1-x|,|r_2-x|\}$. Thus, we obtain the following result:
\begin{align}\label{METLM4}
 &rx(1-x)u'(x)-|\sigma (x)|^{\alpha}\frac{C_{\alpha}}{\alpha}[\frac{1}{(x-r_1)^\alpha}+\frac{1}{(r_2-x)^\alpha}]u(x)\nonumber\\
 &+C_{\alpha}|\sigma (x)|^{\alpha}\int_{r_1-x}^{r_2-x}\frac{u(x+z)-u(x)}{|z|^{1+\alpha}}(dz)=-1,\tag{6.6}
  \end{align}
 where $\sigma (x)=\sigma x$, for $x \in(r_1,r_2)$, and $u(x)=0$ for $x\in D^c$.
 \begin{figure}[h!]
\begin{subfigure}[b]{0.5\linewidth}
  \includegraphics[width=\linewidth]{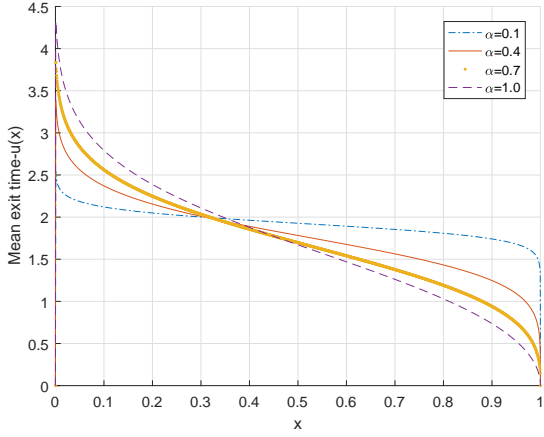}
 \caption{ The MET for $\alpha= 0.1, 0.4, 0.7, 1.0$, $\sigma=0.5$.}
     \end{subfigure}
     \begin{subfigure}[b]{0.5\linewidth}
  \includegraphics[width=\linewidth]{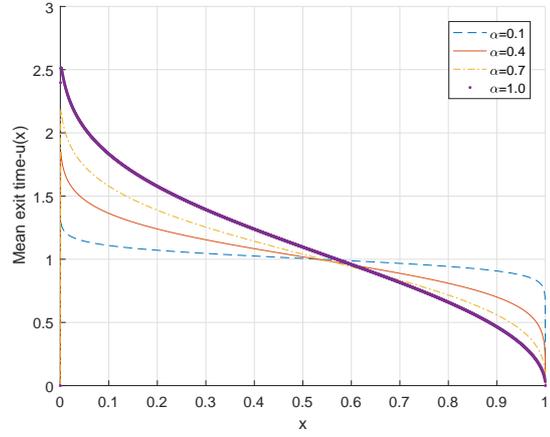}
 \caption{ The MET) for $\alpha= 0.1,0.4, 0.7, 1.0$, $\sigma=1$.}
     \end{subfigure}
     \begin{subfigure}[b]{0.5\linewidth}
  \includegraphics[width=\linewidth]{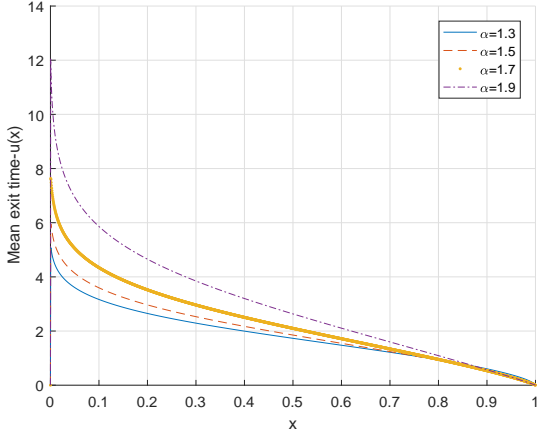}
 \caption{ The MET) for $\alpha= 1.3, 1.5, 1.7, 1.9$, $\sigma=0.5$.}
     \end{subfigure}
     \begin{subfigure}[b]{0.5\linewidth}
  \includegraphics[width=\linewidth]{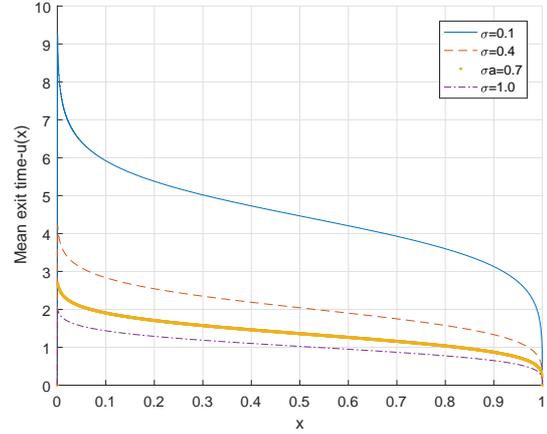}
 \caption{ The MET for $\sigma \in (0,1) $, and $\alpha =0.5$}
     \end{subfigure}
  \caption{The behavior of the mean exit time of Eq. (\ref{SDELM2}) for different values of the paraments, (a) $\sigma$ = 0.5, $\alpha \in (0,1)$ , (b) $r$ = 0.1, $\sigma$ = 1, $\alpha \in (0,1)$,(c) $\alpha \in (1,2)$,  $\sigma=0.5$ (d) $\sigma \in (0,1)$, $\alpha = 0.5$, $r=1.0$.}
  \label{MET2}
 \end{figure}
 Noting that $u$ is not smooth at the boundary point $x=r_1, r_2$, so in order to ensure the integral is smooth, so according the paper \cite{M.L.Hao(2014)}, we can rewrite Eq. (\ref{METLM4}) as:
  \begin{align}\label{METLM5}
 &rx(1-x)u'(x)-|h(x)|^{\alpha}\frac{C_{\alpha}}{\alpha}[\frac{1}{(x-r_1)^\alpha}+\frac{1}{(r_2-x)^\alpha}]u(x)\nonumber\\
 &+C_{\alpha}|h(x)|^{\alpha}\int_{r_1-x}^{r_1+x}\frac{u(x+z)-u(x)}{|z|^{1+\alpha}}(dz)\nonumber\\
 &+C_{\alpha}|h(x)|^{\alpha}\int_{r_1+x}^{r_2-x}\frac{u(x+z)-u(x)-z'u(x)}{|z|^{1+\alpha}}(dz)=-1,\tag{6.7}
  \end{align}
for $x\geq (r_1+r_2)/2$
 \begin{align}\label{METLM6}
 &rx(1-x)u'(x)-|h(x)|^{\alpha}\frac{C_{\alpha}}{\alpha}[\frac{1}{(x-r_1)^\alpha}+\frac{1}{(r_2-x)^\alpha}]u(x)\nonumber\\
 &+C_{\alpha}|h(x)|^{\alpha}\int_{r_1-x}^{r_1+x}\frac{u(x+z)-u(x)}{|z|^{1+\alpha}}(dz)\nonumber\\
 &+C_{\alpha}|h(x)|^{\alpha}\int_{r_1+x}^{r_2-x}\frac{u(x+z)-u(x)-z'u(x)}{|z|^{1+\alpha}}(dz)=-1,\tag{6.8}
  \end{align}
for $x<(r_1+r_2)/2$.
 The solution of Eq. (\ref{METLM5}) and Eq. (\ref{METLM6}), i.e, solution of the mean exit time can be obtained by applying the discretization method which is given in the paper \cite{gao2014mean}.

The numerical results of the MET in the non-Gaussian noise is given in figure (\ref{MET2}a) -(\ref{MET2}d). With a fixed value of the noise intensity $\sigma=0.5$, and the fish population $x \in(0,0.3)$, the result shows that the mean exit time is smaller with a larger value of the stability index ${\alpha}$. While density of the fish population $x \in (0.35, 1)$, the phenomenon is opposite, i.e, MET increases with the increase $\alpha$. The interval (0.3,0.35) is a transition period. In figure \ref{MET2}(b) shows that for the initial density of the fish population $x \in (0,0.55)$, the MET increases with increases in the stability index ${\alpha}$. While $x \in (0.6, 1)$, MET decreases with the increase $\alpha$ with the value of $\sigma=1.0$. The interval (0.55,0.6) is a transition period. In the case, $\alpha \in (1,2)$ and $\sigma=0.5$, the MET is larger with a larger value of ${\alpha}$, (Fig. \ref{MET2}(c)). The MET decreases with increases in the noise intensity $\sigma \in (0,1)$. This implies the fish population is sustainable for a large value of the stability index ${\alpha}$ with the growth rate $r$ and $\sigma$ are fixed. But in the initial density of the fish population $x \in (0,0.3)$ (Fig. \ref{MET2}(a), and in the case $x \in (0.6, 1)$ ( Fig. \ref{MET2}(b)), and with increases in the noise intensity $\sigma$ the fish population moves towards extinction.
\begin{figure}[h!]
    \centering
     \begin{subfigure}[b]{0.45\textwidth}
         \centering
         \includegraphics[width=\linewidth]{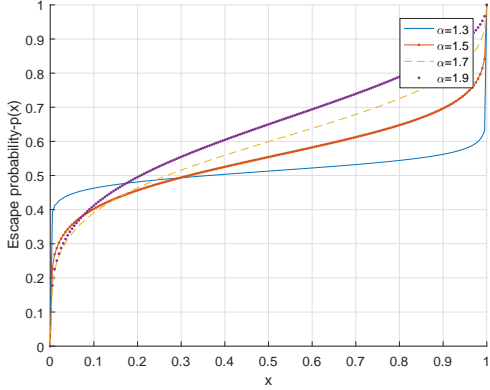}
    \caption{The EP for the stability index $\alpha \in (0,1).$ }
  \end{subfigure}
     \begin{subfigure}[b]{0.45\textwidth}
         \centering
         \includegraphics[width=\linewidth]{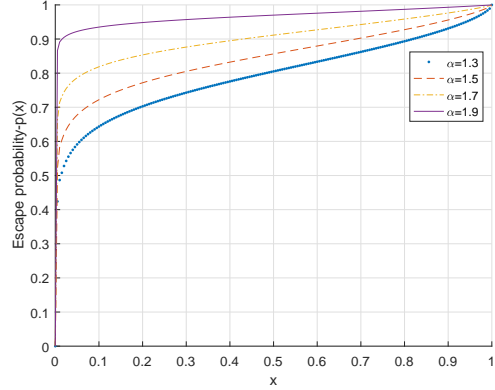}
     \caption{The  behaviour EP for $\alpha \in (1,2).$ }
  \end{subfigure}
     \begin{subfigure}[b]{0.5\textwidth}
         \centering
        \includegraphics[width=\linewidth]{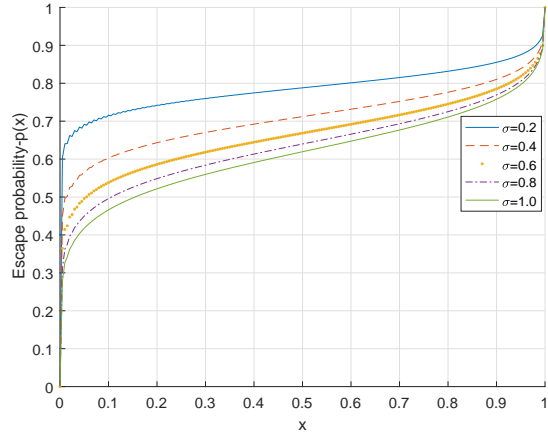}
    \caption{The EP when $\sigma= 0.2, 0.4, 0.6, 0.8, 1.0.$ }
  \end{subfigure}
   \caption{The escape probability of Eq. (\ref{SDELM2}) with fixed $r=1.0$ and $\sigma$ and $\alpha$ vary. (a) $\sigma$ = 1, $\alpha\in (0,1)$  (b) $\sigma$=1, $\alpha \in (1,2)$  (c) $\sigma\in (0,1)$ ,$\alpha = 0.5$.}
    \label{EP2}
\end{figure}
\subsection{\textbf{Escape probability}}

The escape probability of the stochastic differential equation in (\ref{SDELM2}) satisfies the following nonlocal partial differential equation:
\begin{align}\label{EPLM2}
 Ap_E(x) = 0, \qquad x\in D{_1}= (-1,1)\tag{6.9}
 \end{align}

  \[
p_E(x)=
\begin{cases}
1, \qquad x\in E\\
0, \qquad x\in D_1^{c}/ E
\end{cases}
\]
where $A$ is the generator of defined in equation (\ref{Generator}). In our study, we take $E=(-\infty, 0]$. Because the fish population extinction occurs in this interval.

For simplicity, we choose $D=(r_1,r_2)$ instead of the interval $D=(0,K)$, where $K$ is the carrying capacity of the fish population in the environment. We can rewrite equation \ref{EPLM2} as:
\begin{align}\label{EPLM3}
 rx(1-x)p_E'(x)+|h(x)|^{\alpha}\int_{\mathbb{R}\setminus{\{0\}}}(p_E(x+z)-p_E(x)-zp_E'(x)I_{|z|<1}(z))\nu(dz)=0, \tag{6.10}
  \end{align}
for $x\in(r_1,r_2)$; $p_{E}(x)=1$ for $x\in(-\infty,r_1]$ and $p_{E}(x)=0$ for $x\in[r_2,\infty)$.

The numerical algorithms of equation (\ref{EPLM2}) was done based on the scheme in the paper \cite{gao2014mean}, and its numerical simulation is similar to the mean exit time, so by the discretization method given in the paper by T. Gao \cite{gao2014mean}, we obtain the numerical solution of the escape probability.

The numerical solution of escape probability are ploted in figure \ref{EP2} with the noise intensity $\sigma$ and the stability index $\alpha$ varied. For fixed values of the growth rate $r$ and the noise intensity $\sigma$, and for the fish population $x \in (0,0.15)$,  the probability of fish extinction is small with $\alpha$ increases. While fish population $x \in (0.25, 1)$, the phenomenon is opposite. In other words probability of fish extinction is high with the same vales of $\alpha$, $\sigma$ and $r$, (see Fig. \ref{EP2}(a)). From this solution, we conclude that the interval  (0.15, 0.25) is a transition period. In the case $\alpha \in (1,2)$, the EP increases with the increase of stability index $\alpha$,(see Fig. \ref{EP2}(b)).In Fig. \ref{EP2}(c) when $\sigma \in (0,1)$ increases, the EP decreases with fixed values of $\alpha$ and $r$. As a result, we conclude that a large stability index $\alpha \in (1,2)$ induces larger escape probability , this means that the probability of the fish population goes to extinct is high but a larger positive noise intensity $\sigma$ favours smaller escape probability or the fish population is sustained.

\begin{figure}[h!]
    \centering
     \begin{subfigure}[b]{0.45\textwidth}
         \centering
         \includegraphics[width=\linewidth]{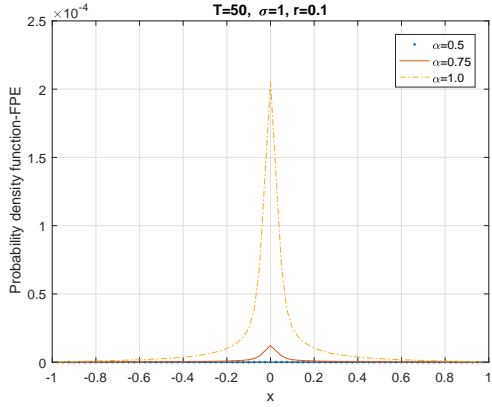}
    \caption{The PDF for $\alpha \in (0,1).$}
  \end{subfigure}
     \begin{subfigure}[b]{0.45\textwidth}
         \centering
        \includegraphics[width=\linewidth]{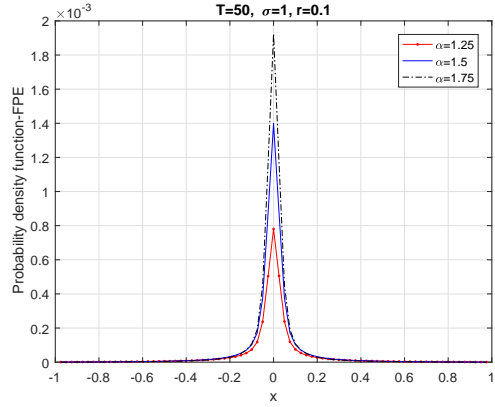}
    \caption{The PDF for  $\alpha \in (1,2)$. }
  \end{subfigure}
     \begin{subfigure}[b]{0.5\textwidth}
         \centering
     \includegraphics[width=\linewidth]{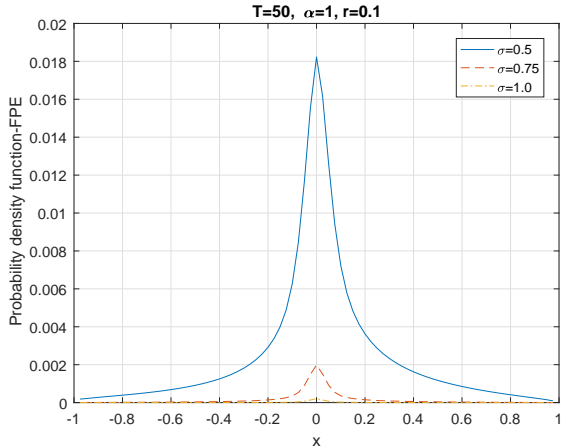}
    \caption{The PDF $\sigma \in (0,1)$. }
  \end{subfigure}
  \caption{  This figure shows the PDF of the FPE of SDE in (\ref{SDELM2}), with fixed $r=0.1$ and for different values of $\sigma$ and $\alpha$, (a) $\sigma = 1.0, \alpha \in (0,1)$ (b) $\sigma = 1.0, \alpha \in (1,2)$ , (c) $\sigma  \in (0,1), \alpha = 0.5$.}
  \label{FPE2}
\end{figure}

Since we take domain $D = (0,K)$ to be in the low concentration region, a smaller MET implies a higher likelihood for the fish population extinction( and vise versa ), and a larger EP indicates a higher likelihood for fish population extinction( and vise versa ).
\subsection{\textbf{Fokker-Plank equation}}

According the Fokker-Plank equation in Eq. (\ref{FPEG}), the Fokker-Planck equation for the stochastic differential equation in (\ref{SDELM2}), i.e., the probability density $p(x,t)$ for the solution process $X(t)$ with initial condition $X_0$ = $x_0$ and $p(x,0)=\sqrt {\frac{40}{\pi}}e^{-40x_0^2}$ satisfies the following nonlocal partial differential equation:
\begin{equation}\label{FPELM}
\frac{\partial p}{\partial t}=-\frac{\partial}{\partial x}(f(x)p(x,t))+\int_{\mathbb{R}\setminus{\{0\}}}[|h(x+z)|^{\alpha}p(x+z)-|h(x)|^{\alpha}p(x)]\nu (dz),\tag{6.11}
\end{equation}
$$p_t = -( r\,x(1-x)p)_x + |h(x)|^{\alpha}\int_{\mathbb{R}\setminus{\{0\}}}[|h(x+z)|^{\alpha}p(x+z)-|h(x)|^{\alpha}p(x)]\nu(dz).$$

To simulate the nonlocal Fokker-Planck equation (\ref{FPELM}), we apply a numerical finite difference method developed in Gao et al. \cite{gao2014mean}.

Figure \ref{FPE2}, shows the results for the probability density function of FPE under multiplicative symmetric $\alpha$-stable L\'evy motion. In Fig. \ref{FPE2}(a), when $\sigma = 1$ and $r= 0.1$ , the PDFs of FPE are larger corresponding to larger values of stability index $\alpha$, for example ( $\alpha = 0.5, 0.75, 1.0)$. When the non-Gaussianity index $\alpha$ lies between 1 and 2, i.e. ( $\alpha = 1.25, 1.5, 1.75)$, the PDF of FPE increases with in increase $\alpha$. (see Fig. \ref{FPE2}(b)). In Figure \ref{FPE2}(c), we can observe that the PDF of the FPE decreases with the increase in the noise intensity $\sigma$ while stability index is kept fixed at $\alpha=1$.

\section{Results}\label{7}

We analyse how the Gaussian noise intensity $\lambda$, the non-Gaussian noise intensity $\sigma$, and the stability index $\alpha$ affect the MET, EP and the behavior of the probability density function of the FPE of equation (\ref{SDEBMLM1}). Then we have explained the biological interpretation of the results based on our numerical experiments. This logistic differential equation model is monostable in some range of growth rate $r$ and carrying capacity $K$.
\emph{\subsection{\textbf{Results of stochastic logistic equation under Gaussian noise}}}
In this subsection, under Gaussian Brownian motion, we present MET, probability of exit, and stationary densities of the Fokker-Plank equation to observe the extinction and recovery time of stochastic logistic equation for different noise intensities as $x\rightarrow 0$.

 For Stochastic logistic system, we now examine the mean exit time, starting at $x \in (0,1),$ and reaching a new place or domain. In Figure (\ref{MeanExitTimeCaption}), using parameter values $\lambda=1$ and $\epsilon=0.001$, we compare a numerical approximation to the solution of Eqs. (\ref{METEq})-(\ref{METBCs}) (dashed red curve) to the solution ({YxminusY1}) of problem (\ref{METDomain01}) (solid blue curve) in which we used a truncated series approximation of $Y(x)$ in Eq. (\ref{YSeries}).
Starting from $x \in D=(0, \infty)$, we note that the expected exit time goes to infinity as $x \rightarrow 0$ when $0<\lambda<\sqrt{2}$.

When $\lambda > 0$, that is, when the noise is present, the equilibrium point can be reached in finite time since fluctuations guarantee that for some finite $t$, $X(t)$ will exceed the value 1. However, when $\lambda = 0$, in this case starting at $X(0) = x_0$ where $0 < x_0< 1$ can not reach the equilibrium point at 1 in finite time. When $\lambda < \sqrt 2$ the MET $u(x)$ from the interval $(0,1)$ at the right boundary is finite, but $\lim_{x\rightarrow 0}u(x)=\infty$. When $\lambda > \sqrt 2$, even though the left boundary is accessible, the MET from $(0,1)$ at this boundary is infinite. ( see Figure \ref{FPE1})

If $\lambda < \sqrt 2$, starting from $x > 0$, the probability of exit at x=0 in finite time is zero, so this boundary is not accessible. However, if $\lambda > \sqrt 2$, the probability of exit in finite time is greater than 0.

When $\sqrt r$  is large relative to $\tilde{\lambda}$ the probability density peaks near $x=1$ but as dispersion intensity $\tilde{\lambda}$ increases relative to $\sqrt r$ the probability mass density accumulates near $x = 0$. If $\lambda < \sqrt 2$, $\lim_{t\rightarrow\infty}X(t) = Z,$ where $Z$ is a random variable with probability density function given by Eq. (\ref{fpe2}). If $\lambda > \sqrt 2$, $q(x)=\lim_{t\rightarrow\infty}p(x,t/x_0)=\delta(x)$, the delta function with all probability mass concentrated at $x=0$.
\emph{\subsection{\textbf{Results on the stochastic logistic equation under non-Gaussian noise}}}
In this section, we have explained the effect of the parameters $\sigma$ and $\alpha$ on the three deterministic quantities, namely MET, EP and FPE under non-Gaussian noise. In the interval of stability index $\alpha \in (0,1)$ and density of the fish population $x \in (0,0.3)$,  we observed that the MET increases with fixed value of the growth rate $r=1.0$ and the noise intensity $\sigma=0.5$. While the fish population size $x \in(0.35,1)$, the numerical result is the same, i.e, the MET increases with the increase $\alpha$.(see Fig.\ref{MET2}(a)). When  $r=1.0$ and $\sigma=1.0$, the MET increases with increases in the stability index in ${\alpha}$ with the fish population size $x \in (0, 0.55)$. While $x \in (0.6,1)$, MET decreases with the increase in $\alpha$,.

Figure \ref{EP2} shows the numerical solution of escape probability with $\sigma$ and $\alpha$ are varied for non-Gaussian noise case. For fixed values of the growth rate $r$ and the noise intensity $\sigma$, we observe that, the probability of fish extinction is small with $\alpha \in (0,1)$) and for fish size in the interval $x \in (0,0.15)$ (see Fig. \ref{EP2}(a)). The initial density of fish size is larger than 0.25, the probability of fish extinction is high with $\alpha \in (0,1)$), ( Fig. \ref{EP2}(a)). In the case $\alpha \in (1,2)$, the EP increases with the increase of $\alpha$. When the values of the stability index $\alpha$ and the growth rate $r$ are kept fixed, the probability of fish extinction is small the noise intensity ($\sigma \in (0,1))$ increases.
An implication of this phenomenon in the stochastic logistic fish growth model can be understood as follows. If $\alpha \in (0,1)$, there is a smaller probability of extinction of the fish population with the interval $x\in (0,0.15)$. Contrary to this phenomenon, the probability of extinction of the fish population is high if the fish density $x$ is greater than $0.25$ and $\alpha \in(0,1)$ and  $\alpha \in (1,2)$. Figure \ref{EP2}(c) exhibits that EP decreases with the increasing $\sigma$ with fixed value of $\alpha$ and the growth rate $r$. This leads to the conclusion that larger noise intensity $\alpha$ indicates the extinction of the fish population is less likely. A large positive noise intensity $\sigma$ can induce small escape probability.

Figure \ref{FPE2} shows the PDF of FPE under multiplicative symmetric $\alpha$-stable L\'evy motion is dependent on the stability index, growth rate, and noise intensity. Figure. In \ref{FPE2}(a), for $\sigma = 1$ and $r$ = 0.1, we observe the PDF of FPE increases as $\alpha$, ($\alpha \in (0,1))$ increases. When the non-Gaussianity index is large, i.e.($\alpha \in (1,2)$), the PDF of FPE increases with the increase in $\alpha$. So a large $\alpha$ can induce larger PDF of FPE, (see Fig. \ref{FPE2}(b)). From Fig. \ref{FPE2}(c), we observe that the PDF of the FPE decreases for different values of the noise intensity $\sigma$ with fixed value of stability index $\alpha$.
\section{Conclusions} \label{8}
In summary, we have investigated stochastic logistic model of the fish population driven by both white noise and non-Gaussian noise. We have indeed proved the existence and uniqueness of a positive solution of our model under the Assumption 1- Assumption 4, and Theorem \ref{Thrm1} and Theorem \ref{Ther}. The dynamical properties of a fish population growth system are investigated based on the mean exit time, escape probability and Fokker-Plank equation.

The MET, EP, and FPE of the logistic model for a fish population with a symmetric $\alpha$-state L\'evy motion satisfy a nonlocal partial differential equation while in the Gaussian case, they satisfy local partial differential equation. We discuss the effects of the noise parameters on the three deterministic quantities MET, EP and FPE in detail. The multiplicative noise makes the problem difficult.

The MET for a stochastic logistic system quantifies how long, in expected sense, the fish population (or the system ) stays in a region in the state space.

We analyze the biological interpretation of the results based on the numerical experiments. From the biological perspective, these results tell us the following about the fish population growth.

A smaller MET indicates a higher likelihood for the fish population extinction ( and vice versa) and a larger escape probability implies a higher likelihood probability of extinction. In other words a higher escape probability from 0 ( left boundary, unstable state ) implies a higher probability of fish population extinction. Thus, a higher escape probability and a smaller mean exit time are not preferred in the fish population growth.

When $\lambda$ is sufficiently small, the dynamics of the system is primarily controlled by the drift. In this case the fish population exhibits slow exponential decay towards equilibrium which tends to increase the MET. When $\lambda$ is near 5.5, relatively larger fluctuations decrease the MET to a local minimum. As $\lambda$ increases, trajectories tend to stagnate and spend more time in the region near $x=0$.

When $\lambda \geq \sqrt2$, the denominator in Eq. (\ref{fpe2}) is infinite implying that $q(x)=0$ for $x>0$ in the limit as $t\rightarrow\infty$, and that all the probability mass has accumulated at $x=0$. In other words, $X(t)$ can exit the state space $(0,\infty)$ at $x=0$ when $\lambda \geq \sqrt2$ but the expected time to exit is infinite.

 This leads us to the conclusion that sample paths of the stochastic differential equation in Eq. (\ref{SDEMB2}) can not reach $x=0$ or $x=\infty$ in finite time as long as $\lambda \geq \sqrt2$. In other words, these boundaries are inaccessible. We further conjecture that the $x=0$ is accessible when $\lambda \geq \sqrt2$.

 If $\lambda $ is strictly larger than $\sqrt 2$, the probability of exit at the left endpoint from the domain $(0,L)$, $P(x,0,L) > 0$. This means that some fraction of the trajectories will never exit at the right endpoint $x = L,$ that is, $Pr\{\omega|X(t,\omega) < L $ for all $t > 0\}> 0.$

In order to get a low likelihood of extinction, we can tune the symmetric index $\alpha$ smaller to have a smaller EP. To get a higher MET, we can tune the noise intensity $\sigma$ smaller, and stability index $\alpha$ larger in the case of non-Gaussian noise.

 The results of escape probability lead to the conclusion that a larger stability index $\alpha$ is less likely leading to the extinction of the fish population. A large positive noise intensity of $\sigma$ can induce small escape probability. Furthermore, when the noise intensity of $\sigma$ is small, the EP is very sensitive to the initial density.
The probability for the fish population $x(t)$ to escape to the left of the domain is larger when the value of the stability index $\alpha $( $\alpha\in (1,2)$) increases. This suggests the fish population goes to extinction (see Fig. \ref{EP2}(b)). Thus we should not choose a large stability index. Because higher EP causes a higher probability of fish population extinction or die out..
\section{Acknowledgments}
The Authors would like to thank Yancai Liu and Xiaofan Li for their helpful discussions on numerical schemes. This work was partly supported by the NSFC grants- China 11801192, 11771449 and 11531006.
\section*{References}

\end{document}